\newtheorem{thm}{Theorem}
\newtheorem{lemma}[thm]{Lemma}
\newtheorem{cor}[thm]{Corollary}
\newtheorem*{mthm}{Main Theorem}
\newcommand{\C}{{\mathbb C}}
\newcommand{\R}{{\mathbb R}}
\newcommand{\cn}{{\mathbb C}^n}
\newcommand{\dist}{{\rm dist}\,}
\newcommand{\Dom}{{\rm Dom}\,}
\begin{document}

\title[Uncertainty Principle]{Uncertainty Principles for the Fock Space}

\author{Kehe Zhu}
\address{Department of Mathematics and Statistics\\
         State University of New York\\
         Albany, NY 12222, USA}
\email{kzhu@math.albany.edu}

\subjclass[2010]{30H20}
\keywords{Fock space, uncertainty principle, Fourier analysis, quantum physics, Gaussian functions.}

\begin{abstract}
We prove several versions of the uncertainty principle for the Fock space $F^2$ in the complex plane. 
In particular, for any unit vector $f$ in $F^2$, we show that
$$\dist(f'+zf,[f])\,\dist(f'-zf,[f])\ge1,$$
where $[f]=\C f$ is the one-dimensional subspace spanned by $f$. We also determine exactly when 
equality occurs above.
\end{abstract}

\maketitle

\section{Introduction}

Heisenberg's Uncertainty Principle in quantum physics states that the position and velocity (or momentum) of a particle 
cannot be measured exactly at the same time. This has nothing to do with the possible imperfection of the measuring 
instrument. It is simply not possible, in theory, to achieve exact measurements for both at the same time. More specifically, 
the product of the ``uncertainty" for the position and the ``uncertainty" for the velocity (or momentum) of a particle is always 
greater than or equal to a tiny positive constant, namely, $h/(4\pi)$, where $h$ is Planck's constant.

There exist many similar uncertainty principles, or sometimes called indeterminacy principles, in physics,
engineering, and mathematics, that are based on position, velocity, momentum, energy, time, and so on. 
In this paper we are going to obtain several versions of the uncertainty principle in the context of the Fock space.
Not surprisingly, the Fock space is one of the significant mathematical tools used in quantum physics.

Let $\C$ be the complex plane $\C$ and
$$d\lambda(z)=\frac1\pi e^{-|z|^2}\,dA(z)$$
be the Gaussian measure on $\C$, where $dA$ is ordinary area measure. We define the Fock space as follows: 
$$F^2=L^2(\C,d\lambda)\cap H(\C),$$
where $H(\C)$ is the space of all entire functions. See \cite{Z} for general properties of the Fock space.

The purpose of the paper is to obtain several versions of the uncertainty principle for the Fock space.
We summarize our results as follows.

\begin{mthm}
For any function $f\in F^2$ and any real numbers $a$ and $b$ we have
$$\|f'+zf-af\|\|f'-zf-ibf\|\ge\|f\|^2.$$
As a consequence, if $f$ is a unit vector in $F^2$, we also have
\begin{enumerate}
\item[(a)] $\dist(f'+zf,[f])\,\dist(f'-zf,[f])\ge1$.
\smallskip
\item[(b)] $\|f'+zf\|\|f'-zf\||\sin(\theta_+)\sin(\theta_-)|\ge1$.
\smallskip
\item[(c)] $(\|f'\|^2+\|zf\|^2)|\sin(\theta_+)\sin(\theta_-)|\ge1$.
\end{enumerate}
Here $[f]$ is the one-dimensional subspace in $F^2$ spanned by $f$, and $\theta_{\pm}$ are the angles
between $f$ and $f'\pm zf$. We also determine exactly when equality holds in each of the cases above.
\end{mthm}

Note that the interesting case here is when the function $f'(z)$ (or equivalently, the function $zf(z)$) also belongs 
to the Fock space $F^2$, which is not always the case. See \cite{CZ}. When the function $f'(z)$ is not in $F^2$, 
each of the left-hand sides of the inequalities above is infinite, so the inequality becomes trivial.

I wish to thank Hans Feichtinger and Bruno Torresani for inviting me to CIRM at Luminy, Marseille, in the fall 2014 
semester. They introduced me to the wonderful area of Gabor Analysis and this research was initiated while I
was visiting them.

\section{Uncertainty principles for the Fock space}

Uncertainty principles are usually consequences of the following general result from functional analysis.
There is no exception here.

\begin{thm}
Suppose $A$ and $B$ are self-adjoint operators, possibly unbounded, on a Hilbert space $H$. Then
\begin{equation}
\|(A-a)x\|\|(B-b)x\|\ge\frac12|\langle[A,B]x,x\rangle|
\label{eq1}
\end{equation}
for all $x\in\Dom(AB)\cap\Dom(BA)$ and all $a,b\in\R$. Here 
$$[A,B]=AB-BA$$
is the commutator of $A$ and $B$. Furthermore, equality in (\ref{eq1}) holds if and only if $(A-a)x$
and $(B-b)x$ are purely imaginary scalar multiples of one another.
\label{1}
\end{thm}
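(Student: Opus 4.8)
\emph{Proof proposal.} The plan is to run the classical Robertson argument and then read off the equality case, paying attention to the domain bookkeeping for the (possibly) unbounded operators.

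First I would reduce to the case $a=b=0$. Since $a,b\in\R$, the operators $A-a$ and $B-b$ are self-adjoint with $\Dom(A-a)=\Dom(A)$ and $\Dom(B-b)=\Dom(B)$; hence $\Dom((A-a)(B-b))=\Dom(AB)$, $\Dom((B-b)(A-a))=\Dom(BA)$, and a one-line expansion gives $[A-a,B-b]=[A,B]$ on $\Dom(AB)\cap\Dom(BA)$. So it suffices to establish $\|Ax\|\,\|Bx\|\ge\tfrac12|\langle[A,B]x,x\rangle|$ together with its equality statement for all $x\in\Dom(AB)\cap\Dom(BA)$, and then apply the result to $A-a$ and $B-b$.

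Next comes the key identity. For $x\in\Dom(AB)\cap\Dom(BA)$ we have $Bx\in\Dom(A)=\Dom(A^*)$ and $Ax\in\Dom(B)=\Dom(B^*)$, so moving $A$ and $B$ across the inner product by self-adjointness gives
$$\langle[A,B]x,x\rangle=\langle ABx,x\rangle-\langle BAx,x\rangle=\langle Bx,Ax\rangle-\langle Ax,Bx\rangle=2i\,\mathrm{Im}\,\langle Bx,Ax\rangle.$$
In particular $\langle[A,B]x,x\rangle$ is purely imaginary, and since the imaginary part of a complex number is bounded in modulus by the number itself, followed by Cauchy--Schwarz,
$$\tfrac12\,|\langle[A,B]x,x\rangle|=|\mathrm{Im}\,\langle Bx,Ax\rangle|\le|\langle Bx,Ax\rangle|\le\|Ax\|\,\|Bx\|,$$
which is (\ref{eq1}) in the reduced case.

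Finally, the equality analysis. Equality in (\ref{eq1}) forces equality in both steps of the last display. Equality in Cauchy--Schwarz means $Ax$ and $Bx$ are linearly dependent; writing $Bx=\lambda Ax$ when $Ax\ne0$, the quantity $\langle Bx,Ax\rangle=\lambda\|Ax\|^2$, and equality in $|\mathrm{Im}\,w|\le|w|$ then forces $\lambda$ to be purely imaginary. The degenerate cases $Ax=0$ or $Bx=0$ make both sides of (\ref{eq1}) vanish and are absorbed by allowing the scalar to be $0$. Undoing the reduction, equality in (\ref{eq1}) holds precisely when $(A-a)x$ and $(B-b)x$ are purely imaginary scalar multiples of one another. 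I do not expect a genuine obstacle here; the only care needed is in justifying the inner-product manipulations in the key identity for unbounded $A,B$ (which is exactly what the hypothesis $x\in\Dom(AB)\cap\Dom(BA)$ provides) and in stating the equality case cleanly so that the vanishing vectors are handled.
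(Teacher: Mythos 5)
Your proposal is correct; note that the paper itself gives no argument for this theorem, simply citing Folland and Gr\"ochenig, and what you have written is precisely the classical Robertson-type proof that appears in those references (polarize the commutator into $2i\,\mathrm{Im}\,\langle Bx,Ax\rangle$ using self-adjointness, then apply $|\mathrm{Im}\,w|\le|w|$ and Cauchy--Schwarz, with the equality case read off from both steps). Your added care about the domain hypotheses and the degenerate vanishing cases is sound and does not change the substance of the standard argument.
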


\begin{proof}
This is well known. See page 27 of \cite{F, G}.
\end{proof}

Every time there are natural self-adjoint operators $A$ and $B$ such that $[A,B]$ is a scalar multiple of
the identity operator, an uncertainty principle arises. We will construct two such operators on $F^2$
based on the operator of multiplication by $z$ and the operator of differentiation.

\begin{lemma}
Let $D:F^2\to F^2$ be the operator of differentiation, that is, $Df=f'$. Then its adjoint $D^*$ is given
by $(D^*f)(z)=zf(z)$.
\label{2}
\end{lemma}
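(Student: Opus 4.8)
The plan is to verify the adjoint relation directly against the monomial basis of $F^2$, which is the most transparent route. Recall that the functions $e_n(z)=z^n/\sqrt{n!}$, $n\ge0$, form an orthonormal basis for $F^2$; this is a standard fact (see \cite{Z}) that I would quote. Since $D$ and multiplication by $z$ both act nicely on monomials, it suffices to check that $\langle De_n,e_m\rangle=\langle e_n,Me_m\rangle$ for all $n,m$, where $(Mf)(z)=zf(z)$.

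First I would compute $De_n=n z^{n-1}/\sqrt{n!}=\sqrt{n}\,e_{n-1}$ for $n\ge1$ (and $De_0=0$), so that $\langle De_n,e_m\rangle=\sqrt{n}\,\delta_{m,n-1}$. Next I would compute $Me_m=z^{m+1}/\sqrt{m!}=\sqrt{m+1}\,e_{m+1}$, so that $\langle e_n,Me_m\rangle=\sqrt{m+1}\,\delta_{n,m+1}$. Both Kronecker deltas are supported on the same relation $n=m+1$, and there the coefficients agree since $\sqrt{n}=\sqrt{m+1}$. This establishes $\langle Df,g\rangle=\langle f,Mg\rangle$ for all finite linear combinations $f,g$ of the $e_n$, hence, by density of the polynomials in $F^2$, for a dense set; combined with closedness of $D$ this identifies $D^*$ with $M$ on the appropriate domain. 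Alternatively, and perhaps cleaner to present, I would avoid the basis entirely and integrate by parts: for polynomials $f,g$ one writes $\langle f',g\rangle=\frac1\pi\int_{\C}f'(z)\overline{g(z)}\,e^{-|z|^2}\,dA(z)$ and uses $\frac{\partial}{\partial\bar z}\big(e^{-|z|^2}\big)=-z\,e^{-|z|^2}$ together with the fact that $\overline{g(z)}$ is anti-holomorphic, so $\frac{\partial}{\partial\bar z}\overline{g(z)}=\overline{g'(z)}$; Stokes' theorem then moves the $\partial/\partial\bar z$ off $f'$ and produces exactly $\frac1\pi\int_{\C}f(z)\,\overline{zg(z)}\,e^{-|z|^2}\,dA(z)=\langle f,Mg\rangle$, with the boundary term vanishing because of the Gaussian decay.

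The main obstacle is not the computation but the domain bookkeeping: $D$ is an unbounded operator on $F^2$, so one must be careful about what ``$D^*f=zf$'' means. The cleanest statement is that $M$ (multiplication by $z$, with its natural maximal domain) is contained in $D^*$, and in fact equals it; to get equality one checks that the polynomials are a common core, or invokes the fact that $D$ and $M$ are mutually adjoint closed operators on $F^2$ — a point that can be handled by the basis computation above, since it shows $D$ and $M$ act as the backward and forward weighted shifts, which are well known to be adjoints of one another. I would present the integration-by-parts argument for the inclusion and remark that equality follows from density of polynomials, referring to \cite{Z} for the underlying Fock-space facts.
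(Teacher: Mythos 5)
Your primary argument — computing the action of $D$ and of multiplication by $z$ on the orthonormal basis $e_n(z)=z^n/\sqrt{n!}$ and matching the weighted-shift coefficients $\sqrt{n}=\sqrt{m+1}$ on polynomials, then invoking density — is exactly the paper's proof, just organized basis-element by basis-element rather than for general polynomial coefficient sequences. The alternative integration-by-parts sketch and the extra care about domains are fine additions (though to move the derivative off $f'$ you want $\partial/\partial z$, not $\partial/\partial\bar z$), but the core approach coincides with the paper's.
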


\begin{proof}
The standard orthonormal basis of $F^2$ is given by
$$e_n(z)=\frac{z^n}{\sqrt{n!}},\qquad n=0,1,2,\cdots.$$
If
$$f=\sum_{n=0}^\infty a_ne_n,\qquad g=\sum_{n=0}^\infty b_ne_n,$$
are polynomials (which are dense in $F^2$), then
$$Df(z)=\frac d{dz}\sum_{n=0}^\infty\frac{a_n}{\sqrt{n!}}\,z^n=\sum_{n=1}^\infty a_n\frac n{\sqrt{n!}} z^{n-1}
=\sum_{n=0}^\infty a_{n+1}\sqrt{n+1}e_n(z).$$
Also,
$$zg(z)=\sum_{n=0}^\infty\frac{b_n}{\sqrt{n!}}\,z^{n+1}=\sum_{n=1}^\infty\sqrt nb_{n-1}e_n(z).$$
It follows that
$$\langle Df,g\rangle=\sum_{n=0}^\infty\sqrt{n+1}a_{n+1}\overline b_n
=\sum_{n=1}^\infty\sqrt na_n\overline b_{n-1}=\langle f,zg\rangle.$$
This proves the desired result.
\end{proof}

It is easy to check that $[D,D^*]=I$. In fact,
\begin{equation}
(DD^*-D^*D)f=(zf)'-zf'=f
\label{eq2}
\end{equation}
for all $f\in F^2$. In the quantum theory of harmonic oscillators, such a $D$ is called an annihilation operator
and $D^*$ is called a creation operator. However, we cannot apply Theorem~\ref{1} directly to the standard 
commutation relation $[D,D^*]=I$, because the operators $D$ and $D^*$ are not self-adjoint.
 
Thus we consider the following two self-adjoint operators on $F^2$:
$$A=D+D^*,\qquad B=i(D-D^*).$$
More specifically, we have
\begin{equation}
Af(z)=f'(z)+zf(z),\qquad Bf(z)=i(f'(z)-zf(z)).
\label{eq3}
\end{equation}
It follows from \cite{CZ} that, for a function $f\in F^2$, we have $f'\in F^2$ if and only if $zf\in F^2$.
If $f'\in F^2$, then both $Af$ and $Bf$ are well defined. If both $f'+zf$ and $f'-zf$ are in $F^2$,
then by taking the sum and difference of these two functions, we see that $f'$ and $zf$ are both
in $F^2$. Therefore, the intersection of the domains of $A$ and $B$ consists of those functions
$f$ such that $f'$ (or $zf$) is still in $F^2$. It is possible to identify the domains of $AB$, $BA$, and
their intersection as well.

\begin{lemma}
For the operators $A$ and $B$ defined above, we have $[A,B]=-2iI$, where $I$ is the identity
operator on $F^2$ and $i$ is the imaginary unit.
\label{3}
\end{lemma}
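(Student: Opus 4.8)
The plan is to reduce everything to the canonical commutation relation $[D,D^*]=I$, which was already verified in~(\ref{eq2}), and then carry out a short algebraic simplification. First I would work on the dense subspace of polynomials (as in the proof of Lemma~\ref{2}), where $D$, $D^*$, and all their products are unambiguously defined, so that the formal manipulations below are legitimate; the identity of operators then extends to the common domain of $AB$ and $BA$ by the usual density argument.

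Next I would substitute $A=D+D^*$ and $B=i(D-D^*)$ directly into the commutator and expand. The key computation is
\begin{align*}
AB &= i(D+D^*)(D-D^*) = i\bigl(D^2 - DD^* + D^*D - (D^*)^2\bigr),\\
BA &= i(D-D^*)(D+D^*) = i\bigl(D^2 + DD^* - D^*D - (D^*)^2\bigr),
\end{align*}
so that the $D^2$ and $(D^*)^2$ terms cancel and
$$[A,B] = AB - BA = i\bigl(-2DD^* + 2D^*D\bigr) = -2i\,(DD^* - D^*D) = -2i[D,D^*].$$

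Finally I would invoke~(\ref{eq2}), which gives $DD^* - D^*D = I$ on $F^2$, to conclude $[A,B] = -2iI$, as claimed.

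Honestly there is no real obstacle here: the statement is a one-line consequence of the already-established relation $[D,D^*]=I$ together with bilinearity of the commutator and the cancellation of the quadratic terms $D^2$ and $(D^*)^2$. The only point deserving a word of care is domain bookkeeping — making sure the expansion of $AB$ and $BA$ is valid — which is handled by first checking the identity on polynomials and then extending by density, exactly as in the preceding lemmas.
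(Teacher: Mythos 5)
Your proposal is correct and follows essentially the same route as the paper: expand $AB-BA$ using $A=D+D^*$, $B=i(D-D^*)$, cancel the quadratic terms, and reduce to the relation $DD^*-D^*D=I$ from (\ref{eq2}). The extra remark about verifying the identity on polynomials and extending by density is a reasonable bit of domain bookkeeping that the paper leaves implicit.
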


\begin{proof}
We have
\begin{eqnarray*}
AB-BA&=&i\left[(D+D^*)(D-D^*)-(D-D^*)(D+D^*)\right]\\
&=&2i[D^*D-DD^*]=-2iI.
\end{eqnarray*}
The last equality above follows from (\ref{eq2}).
\end{proof}

We now derive the first version of the uncertainty principle for the Fock space.

\begin{thm}
Let $f\in F^2$. We have
\begin{equation}
\|f'+zf-af\|\|f'-zf-ibf\|\ge\|f\|^2
\label{eq4}
\end{equation}
for all real numbers $a$ and $b$. Furthermore, equality holds if and only if there exist a positive constant $c$
and a complex constant $C$ such that
$$f(z)=C\exp\left(\frac{c-1}{2(c+1)}z^2+\frac{a+ibc}{c+1}z\right).$$
\label{4}
\end{thm}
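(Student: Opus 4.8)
The plan is to apply Theorem~\ref{1} with the self-adjoint operators $A=D+D^*$ and $B=i(D-D^*)$ from \eqref{eq3}, for which Lemma~\ref{3} gives $[A,B]=-2iI$. For a unit vector $f$ the right-hand side of \eqref{eq1} becomes $\frac12|\langle-2if,f\rangle|=\|f\|^2$, and the left-hand side is exactly $\|Af-af\|\,\|Bf-bf\|=\|f'+zf-af\|\,\|i(f'-zf)-bf\|$. Since multiplication by $i$ is an isometry and $\|i(f'-zf)-bf\|=\|f'-zf+ibf\|=\|f'-zf-ibf\|$ (the last equality because $\|g\|=\|\bar{\cdot}\|$... more simply, replacing $b$ by $-b$ is harmless since $b$ ranges over all of $\R$), we obtain \eqref{eq4}. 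For a general (not necessarily unit) $f$ one rescales, or simply notes both sides are homogeneous of degree $2$ in $f$. So the inequality itself is essentially immediate once the operator framework is in place.

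The substantive part is the equality characterization. Theorem~\ref{1} tells us equality holds precisely when $(A-a)f$ and $(B-b)f$ are purely imaginary scalar multiples of one another, i.e. there is a real $t$ (or $t=\infty$, the degenerate cases where one side vanishes) with $(B-b)f=it\,(A-a)f$. Writing this out using \eqref{eq3}, I would get
\begin{equation}
i(f'-zf)-bf=it\bigl(f'+zf-af\bigr),
\label{eqplan}
\end{equation}
which is a first-order linear ODE for $f$. Collecting the $f'$ and $f$ terms gives $(1-t)f'=\bigl((1+t)z+b-ita\bigr)f$, whence, assuming $t\neq1$,
\begin{equation}
\frac{f'}{f}=\frac{1+t}{1-t}\,z+\frac{b-ita}{1-t}.
\label{eqplan2}
\end{equation}
Integrating yields $f(z)=C\exp\!\bigl(\frac{1+t}{2(1-t)}z^2+\frac{b-ita}{1-t}z\bigr)$. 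The remaining task is to match this with the claimed form $f(z)=C\exp\!\bigl(\frac{c-1}{2(c+1)}z^2+\frac{a+ibc}{c+1}z\bigr)$: I would set $c$ so that $\frac{c-1}{c+1}=\frac{1+t}{1-t}$, i.e. $c=-1/t$ (equivalently $t=-1/c$), and then check that the linear coefficient also transforms correctly, $\frac{b-ita}{1-t}=\frac{a+ibc}{c+1}$ after substituting $t=-1/c$, which is a routine algebraic identity. One also has to track which values of $t$ are admissible: $c>0$ corresponds to $t<0$, and I must argue that $c\le 0$ (i.e. $t\ge 0$, including $t=1$ and $t=\infty$) either is excluded or fails to give a function in $F^2$.

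The main obstacle is exactly that last point: ensuring the equality condition lands in $F^2$ and pinning down the sign of $c$. A Gaussian $\exp(\alpha z^2+\beta z)$ lies in $F^2$ if and only if $|\alpha|<1/2$, and the map $t\mapsto\alpha=\frac{1+t}{2(1-t)}$ has $|\alpha|<\frac12\iff \operatorname{Re}\,$ something $\iff t<0$, i.e. $c=-1/t>0$. So the positivity of $c$ is \emph{forced} by membership in $F^2$ rather than being an arbitrary normalization, and the degenerate cases ($t=1$, giving no nonzero solution, and $t=\infty$, i.e. $(A-a)f=0$, giving $\alpha=-1/2$, not in $F^2$) are precisely the ones that must be ruled out. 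I would also double-check the converse direction — that every such Gaussian actually satisfies \eqref{eqplan} for the corresponding $t$ and hence realizes equality — and verify these functions genuinely lie in $\Dom(AB)\cap\Dom(BA)$ so that Theorem~\ref{1} applies to them, which holds since differentiating a Gaussian in $F^2$ keeps it in $F^2$.
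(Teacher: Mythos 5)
Your proposal follows essentially the same route as the paper: apply Theorem~\ref{1} to $A=D+D^*$, $B=i(D-D^*)$ with $[A,B]=-2iI$, turn the equality condition into a first-order linear ODE, solve it to get a Gaussian $C\exp(\alpha z^2+\beta z)$, and use membership in $F^2$ to force $|\alpha|<\tfrac12$, i.e.\ $c>0$; your treatment of the degenerate cases ($t=1$ giving only $f=0$, and the borderline $\alpha=\pm\tfrac12$ cases excluded by the growth condition $f(z)e^{-|z|^2/2}\to0$) matches the paper's handling of $c=-1$ and $c=0$. One caution on the part you deferred as ``routine algebra'': it does not check out as written. Dividing $i(1-t)f'=i(1+t)zf+(b-ita)f$ by $i$ gives the linear coefficient $\frac{-ta-ib}{1-t}$, not $\frac{b-ita}{1-t}$ (you dropped a factor of $1/i$), and with $t=-1/c$ this yields $\frac{a-ibc}{c+1}$ rather than $\frac{a+ibc}{c+1}$; the residual sign flip on $b$ is exactly the ``harmless'' replacement $b\mapsto-b$ you made when passing from $\|i(f'-zf)-bf\|$ to $\|f'-zf-ibf\|$, which is indeed harmless for the inequality but must be undone at the end to state the equality case in the variables of the theorem. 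The paper sidesteps this bookkeeping by applying Theorem~\ref{1} directly to $\|i(f'-zf)+bf\|=\|f'-zf-ibf\|$ and writing the equality condition as $f'+zf-af=ic\left[i(f'-zf)+bf\right]$, which lands on $\frac{a+ibc}{c+1}$ immediately.
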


\begin{proof}
The inequality in (\ref{eq4}) follows from Theorem~\ref{1} and Lemma~\ref{3}, because $a$ and $b$ are arbitrary and
$$\|i(f'-zf)+bf\|=\|f'-zf-ibf\|.$$ 
Also, it follows from Theorem~\ref{1} that equality holds in (\ref{eq4}) if and only if there exists a real constant $c$ such that
\begin{equation}
f'+zf-af=ic\left[i(f'-zf)+bf\right]=-c(f'-zf)+ibcf,
\label{eq5}
\end{equation}
which can be rewritten as
\begin{equation}
(1+c)f'+\left[(1-c)z-(a+ibc)\right]f=0.
\label{eq6}
\end{equation}
If $c=-1$, the only solution for (\ref{eq6}) is $f=0$. If $c\not=-1$, it is elementary to show that the general solution 
of (\ref{eq6}) is given by
\begin{equation}
f(z)=C\exp\left(\frac{c-1}{2(c+1)}z^2+\frac{a+ibc}{c+1}z\right),
\label{eq7}
\end{equation}
where $C$ is any complex constant. 

It is well known that every function $f\in F^2$ satisfies
\begin{equation}
\lim_{z\to\infty}f(z)\exp\left(-\frac12|z|^2\right)=0.
\label{eq8}
\end{equation}
See page 38 of \cite{Z} for example. Therefore, a necessary condition for the function in (\ref{eq7})
to be in $F^2$ is that either $C=0$ or $|c-1|\le|c+1|$. Since $c$ is real, this is equivalent to $(c-1)^2\le(c+1)^2$,
or $c\ge0$. When $c=0$, the function in (\ref{7}) becomes
$$f(z)=C\exp\left(-\frac12z^2+az\right).$$
Choosing $z=ir$, where $r\to+\infty$, and applying (\ref{eq8}), we conclude that $C$ must be zero.
This completes the proof of the theorem.
\end{proof}

To derive the next version of the uncertainty principle for the Fock space, we now fix some function
$f\in F^2$. Since $A$ is self-adjoint, for any real $a$ we have
\begin{eqnarray*}
\|(A-a)f\|^2&=&\|Af\|^2+|a|^2\|f\|^2-2a\langle Af,f\rangle\\
&=&\|Af\|^2+\|f\|^2\left[\left|a-\frac{\langle Af,f\rangle}{\|f\|^2}\right|^2-\frac{|\langle Af,f\rangle|^2}{\|f\|^4}\right]\\
&\ge&\|Af\|^2-\frac{|\langle Af,f\rangle|^2}{\|f\|^2}.
\end{eqnarray*}
This shows that
$$\min_{a\in\R}\|(A-a)f\|^2=\|Af\|^2-\frac{|\langle Af,f\rangle|^2}{\|f\|^2},$$
and the minimum is attained when
$$a=\frac{\langle Af,f\rangle}{\|f\|^2}.$$
In other words, we have
$$\min_{a\in\R}\|f'+zf-af\|^2=\|f'+zf\|^2-\frac{|\langle f'+zf,f\rangle|^2}{\|f\|^2},$$
and the minimum is attained when
$$a=\frac{\langle f'+zf,f\rangle}{\|f\|^2}.$$

Similarly, we have
\begin{eqnarray*}
\min_{b\in\R}\|f'-zf-ibf\|^2&=&\min_{b\in\R}\|i(f'-zf)+bf\|^2\\
&=&\|f'-zf\|^2-\frac{|\langle f'-zf,f\rangle|^2}{\|f\|^2},
\end{eqnarray*}
and the minimum is attained when 
$$b=-\frac{\langle f'-zf,f\rangle}{\|f\|^2}\,i.$$
Specializing to the case when $f$ is a unit vector, we obtain the following version of the uncertainty principle.

\begin{cor}
If $f$ is a unit vector in $F^2$, then
$$(\|f'+zf\|^2-|\langle f'+zf,f\rangle|^2)(\|f'-zf\|^2-|\langle f'-zf,f\rangle|^2)\ge1.$$
Furthermore, equality holds if and only if 
$$f(z)=C\exp\left(\frac{c-1}{2(c+1)}\,z^2+\frac{a+ibc}{c+1}\,z\right),$$
where $c$ is positive, $a$ and $b$ are real, $C$ is complex, and these constants satisfy $\|f\|=1$.
\label{5}
\end{cor}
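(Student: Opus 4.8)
The plan is to reduce Corollary~\ref{5} to the already-established Theorem~\ref{4} by choosing, for the given unit vector $f$, the \emph{optimal} real parameters $a$ and $b$. Specifically, the displayed computation immediately preceding the corollary shows that
$$\min_{a\in\R}\|f'+zf-af\|^2=\|f'+zf\|^2-|\langle f'+zf,f\rangle|^2,$$
with the minimum attained at $a_0=\langle f'+zf,f\rangle$ (using $\|f\|=1$), and similarly
$$\min_{b\in\R}\|f'-zf-ibf\|^2=\|f'-zf\|^2-|\langle f'-zf,f\rangle|^2,$$
attained at $b_0=-i\langle f'-zf,f\rangle$ (which is real because $i(D-D^*)$ is self-adjoint, so $\langle i(f'-zf),f\rangle\in\R$). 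First I would apply the inequality~(\ref{eq4}) of Theorem~\ref{4} with exactly this pair $(a_0,b_0)$; since $\|f\|=1$, the right-hand side is $1$, and the left-hand side equals the product of the two minimized quantities, giving the asserted inequality at once.

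Next I would handle the equality case. If equality holds in the corollary's inequality, then equality holds in~(\ref{eq4}) for the pair $(a_0,b_0)$, so Theorem~\ref{4} supplies a positive constant $c$ and a complex constant $C$ with
$$f(z)=C\exp\left(\frac{c-1}{2(c+1)}z^2+\frac{a_0+ib_0c}{c+1}z\right),$$
and of course $\|f\|=1$ forces $C$ and $c$ to be related. This is precisely the claimed form (with $a=a_0$, $b=b_0$). Conversely, if $f$ has the stated form with $c>0$ and $\|f\|=1$, I would verify that the real numbers $a$, $b$ appearing in the exponent are in fact the optimal $a_0,b_0$ for this $f$; this follows because such an $f$ satisfies equation~(\ref{eq6}), hence~(\ref{eq5}), so $f'+zf-af$ and $i(f'-zf)+bf$ are purely imaginary multiples of each other, which by the equality clause of Theorem~\ref{1} means equality holds in~(\ref{eq4}) for that $(a,b)$ — and then a short argument shows this $(a,b)$ must be the minimizing pair, so equality propagates to the corollary.

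The one point requiring a little care — and the likely main obstacle — is the last implication: showing that when $f$ has the Gaussian form, the exponent parameters $a,b$ genuinely coincide with the variationally optimal $a_0=\langle f'+zf,f\rangle$ and $b_0=-i\langle f'-zf,f\rangle$, rather than merely being \emph{some} pair for which~(\ref{eq4}) is an equality. The cleanest route is to observe that if~(\ref{eq4}) holds with equality at some $(a,b)$, then in particular
$$\|f'+zf-af\|\,\|f'-zf-ibf\|=1\le \Big(\min_{a'}\|f'+zf-a'f\|\Big)\Big(\min_{b'}\|f'-zf-ib'f\|\Big),$$
while the reverse inequality is trivial since each factor dominates its own minimum; hence each factor \emph{is} its minimum, forcing $a=a_0$, $b=b_0$, and equality in the corollary. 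I expect this squeeze argument to close the loop with no residual computation beyond what Theorem~\ref{4} already provides.
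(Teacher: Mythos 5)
Your proposal is correct and follows essentially the same route as the paper: the inequality is obtained by applying Theorem~\ref{4} at the minimizing parameters $a_0=\langle f'+zf,f\rangle$, $b_0=-i\langle f'-zf,f\rangle$ via the minimization computation preceding the corollary, and the equality case is handled by the same squeeze between the value at $(a,b)$ and the product of the two minima. The only difference is that you make the final squeeze argument explicit where the paper compresses it into the phrase ``together with the minimization argument forces.''
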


\begin{proof}
The desired inequality follows from Theorem~\ref{4} and the minimization argument following it. 
Also, by Theorem~\ref{4} and the minimization argument, we see that we can achieve equality 
if and only if (\ref{eq5}) is satisfied for some positive $c$ and for
$$a=\langle f'+zf,f\rangle,\qquad b=-i\langle f'-zf,f\rangle,$$
that is,
\begin{equation}
f'+zf-\langle f'+zf,f\rangle f=-c\left[f'-zf-\langle f'-zf,f\rangle f\right].
\label{eq9}
\end{equation}

If $f(z)$ is a function in the specified form, then by Theorem~\ref{4}, we have
$$\|f'+zf-af\|^2\|f'-zf-ibf\|^2=1,$$
which together with the minimization argument forces
$$(\|f'+zf\|^2-|\langle f'+zf,f\rangle|^2)(\|f'-zf\|^2-|\langle f'-zf,f\rangle|^2)=1.$$
On the other hand, if this equality holds, then (\ref{eq9}) holds, which means (\ref{eq6}) holds with
$a=\langle f'+zf,f\rangle$ and $b=\langle f'-zf,f\rangle$. Solving equation (\ref{eq6}) leads to the
specified form for $f$. The completes the proof of the corollary.
\end{proof}

\begin{cor}
For any $f\in F^2$ we have
$$\|f'+zf\|\|f'-zf\|\ge\|f\|^2.$$
Furthermore, equality holds if and only if
$$f(z)=C\exp\left(\frac{c-1}{2(c+1)}\,z^2\right)$$
for some positive $c$ and some complex $C$, or equivalently, $f(z)=Ce^{rz^2}$, where $C$ is complex and
$-\frac12<r<\frac12$.
\label{6}
\end{cor}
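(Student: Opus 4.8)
The plan is to deduce this from Corollary~\ref{5} by simply dropping the (nonnegative) inner-product terms. Indeed, for any $f\in F^2$, applying Theorem~\ref{4} with $a=b=0$ gives $\|f'+zf\|\,\|f'-zf\|\ge\|f\|^2$ directly, so the inequality itself requires no new work. The substance of the statement is the equality case, and here I would argue as follows. Equality in $\|f'+zf\|\,\|f'-zf\|\ge\|f\|^2$ forces equality in $\|f'+zf-af\|\,\|f'-zf-ibf\|\ge\|f\|^2$ with $a=b=0$, since the left-hand side only decreases when we subtract the optimal scalars; hence equality must already hold at $a=b=0$, and moreover the optimal $a,b$ from the minimization argument must be $0$ (otherwise the product would be strictly smaller than $\|f\|^2$). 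By Theorem~\ref{4}, equality at $a=b=0$ means
$$f(z)=C\exp\left(\frac{c-1}{2(c+1)}z^2\right)$$
for some positive $c$ and complex $C$.

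Conversely, if $f$ has this form, one checks that $f'+zf$ and $f'-zf$ are scalar multiples of one another (both are multiples of $zf$, in fact), so that $\langle f'+zf,f\rangle=\langle f'-zf,f\rangle=0$ by the symmetry $f(-z)=f(z)$ — the integrand defining each inner product is odd. Thus $a=b=0$ is optimal and Theorem~\ref{4} gives equality. This establishes the first characterization. For the reparametrization, set $r=(c-1)/(2(c+1))$; as $c$ ranges over $(0,\infty)$, a direct computation shows $r$ ranges over the open interval $(-\tfrac12,\tfrac12)$ (the map $c\mapsto r$ is a monotone bijection, with $c\to0^+$ giving $r\to-\tfrac12$ and $c\to\infty$ giving $r\to\tfrac12$), which is exactly the range for which $Ce^{rz^2}\in F^2$ by the growth estimate (\ref{eq8}).

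The only mild subtlety — and the step I would be most careful about — is confirming that the vanishing of $\langle f'\pm zf,f\rangle$ for the Gaussian $f(z)=Ce^{rz^2}$ is what makes $a=b=0$ the minimizers, so that the equality statement of Theorem~\ref{4} applies cleanly and is not vacuous. This amounts to observing that $f'\pm zf = (2rz\pm z)f = (2r\pm 1)zf$, together with $\langle zf,f\rangle = 0$, which follows because $z\,|f(z)|^2 e^{-|z|^2}$ is odd in $z$ and integrates to zero over $\C$. Everything else is routine algebra, so no serious obstacle is expected.
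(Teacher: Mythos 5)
Your proposal is correct and follows essentially the same route as the paper, whose entire proof is the one-line observation that the statement is Theorem~\ref{4} specialized to $a=b=0$. The additional machinery you invoke --- arguing that the optimal $a,b$ from the minimization must vanish, and verifying $\langle f'\pm zf,f\rangle=0$ for the even Gaussian $Ce^{rz^2}$ --- is harmless but unnecessary, since Theorem~\ref{4} already characterizes the equality case for each \emph{fixed} pair $(a,b)$, so plugging in $a=b=0$ gives both directions at once; your reparametrization $r=\frac{c-1}{2(c+1)}$, $c\in(0,\infty)\mapsto r\in(-\tfrac12,\tfrac12)$, is exactly right.
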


\begin{proof}
This follows directly from Theorem~\ref{4} by setting $a=b=0$.
\end{proof}

Since
\begin{eqnarray*}
\|f'\|^2+\|zf\|^2&=&\frac12\left[\|f'+zf\|^2+\|f'-zf\|^2\right]\\
&\ge&\|f'+zf\|\|f'-zf\|,
\end{eqnarray*}
it follows from Corollary~\ref{6} that 
$$\|f'\|^2+\|zf\|^2\ge\|f\|^2,\qquad f\in F^2.$$
This is a trivial inequality though, as can easily be seen from the Taylor expansion of $f$ and the standard orthonormal 
basis for $F^2$. However, we can modify the argument above to obtain something nontrivial and more interesting.

\begin{cor}
For any $f\in F^2$ and $\sigma>0$ we have
$$\frac\sigma2\|f'+zf\|^2+\frac1{2\sigma}\|f'-zf\|^2\ge\|f\|^2.$$
Moreover, equality holds if and only if
$$f(z)=C\exp\left(\frac{1-\sigma}{2(1+\sigma)}\,z^2\right),$$
where $C$ is any complex constant.
\label{7}
\end{cor}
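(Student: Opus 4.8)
The plan is to mimic the derivation of Corollary~\ref{6} but with an asymmetric weighting, so that the weighted arithmetic--geometric mean inequality is applied in a way that records the extra parameter $\sigma$. First I would observe that for any $\sigma>0$,
\[
\frac\sigma2\|f'+zf\|^2+\frac1{2\sigma}\|f'-zf\|^2\ge 2\sqrt{\frac\sigma2\cdot\frac1{2\sigma}}\,\|f'+zf\|\,\|f'-zf\|=\|f'+zf\|\,\|f'-zf\|,
\]
by the inequality $x+y\ge 2\sqrt{xy}$ applied to the two nonnegative quantities $x=\frac\sigma2\|f'+zf\|^2$ and $y=\frac1{2\sigma}\|f'-zf\|^2$. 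Combining this with Corollary~\ref{6}, which gives $\|f'+zf\|\,\|f'-zf\|\ge\|f\|^2$, yields the desired inequality immediately. (If $f'\notin F^2$ both sides of the claimed inequality are infinite and there is nothing to prove, so we may assume $f'\in F^2$.)

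Next I would analyze the equality case. Equality in the final inequality forces equality in both of the two steps above. Equality in the arithmetic--geometric mean step requires $\frac\sigma2\|f'+zf\|^2=\frac1{2\sigma}\|f'-zf\|^2$, i.e. $\|f'-zf\|=\sigma\|f'+zf\|$. Equality in Corollary~\ref{6} requires, by that corollary, that $f(z)=C\exp\!\left(\frac{c-1}{2(c+1)}z^2\right)$ for some positive $c$ and complex $C$; equivalently $f$ satisfies the differential equation $(1+c)f'+(1-c)zf=0$, which is exactly~(\ref{eq6}) with $a=b=0$. So the remaining task is to pin down which value of $c$ is consistent with the balancing condition $\|f'-zf\|=\sigma\|f'+zf\|$.

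The main obstacle is this last bookkeeping step: relating $c$ to $\sigma$. From $(1+c)f'=-(1-c)zf$ we get $f'=\frac{c-1}{c+1}\,zf$, hence
\[
f'+zf=\frac{2c}{c+1}\,zf,\qquad f'-zf=\frac{-2}{c+1}\,zf,
\]
so $\|f'-zf\|=\frac1c\,\|f'+zf\|$ (using $c>0$), and the balancing condition $\|f'-zf\|=\sigma\|f'+zf\|$ becomes simply $\sigma=1/c$, i.e. $c=1/\sigma$. Substituting $c=1/\sigma$ into the exponent $\frac{c-1}{2(c+1)}$ gives $\frac{1/\sigma-1}{2(1/\sigma+1)}=\frac{1-\sigma}{2(1+\sigma)}$, which is exactly the claimed form $f(z)=C\exp\!\left(\frac{1-\sigma}{2(1+\sigma)}z^2\right)$. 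Conversely, for $f$ of this form one checks directly that $\|f'-zf\|=\sigma\|f'+zf\|$ and that Corollary~\ref{6} holds with equality, so equality propagates back up through both steps. This completes the proof.
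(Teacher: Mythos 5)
Your proposal is correct and follows essentially the same route as the paper: both reduce the inequality to Corollary~\ref{6} via the arithmetic--geometric mean inequality applied to the two weighted terms, and both characterize equality by combining the equality case of Corollary~\ref{6} with the balancing condition $\|f'-zf\|=\sigma\|f'+zf\|$, which forces $c\sigma=1$. Your write-up is merely more explicit than the paper's in verifying that $c=1/\sigma$ turns the exponent $\frac{c-1}{2(c+1)}$ into $\frac{1-\sigma}{2(1+\sigma)}$.
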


\begin{proof}
By Corollary \ref{6}, we have
\begin{eqnarray*}
\|f\|^2&\le&\|\sqrt\sigma(f'+zf)\|\|(f'-zf)/\sqrt\sigma\|\\
&\le&\frac12\left[\|\sqrt\sigma(f'+zf)\|^2+\|(f'-zf)/\sqrt\sigma\|^2\right]\\
&=&\frac\sigma2\|f'+zf\|^2+\frac1{2\sigma}\|f'-zf\|^2.
\end{eqnarray*}
Furthermore, equality holds if and only if
$$f'+zf=-c(f'-zf),\qquad \|\sqrt\sigma(f'+zf)\|=\|(f'-zf)/\sqrt\sigma\|.$$
This is equivalent to
$$f(z)=C\exp\left(\frac{c-1}{2(c+1)}\,z^2\right),\qquad c\sigma=1.$$
The proof is complete.
\end{proof}

We will comment more on this version of the uncertainty principle in the next section. We now obtain several
versions of the uncertainty principle that are based on the geometric notions of 
angle and distance. These results provide better estimates than those in Corollaries \ref{5} to \ref{7}.

\begin{cor}
Suppose $f$ is any function in $F^2$, not identically zero, and $\theta_{\pm}$ are the angles between $f$ and 
$f'\pm zf$. Then
$$\|f'+zf\|\|f'-zf\||\sin(\theta_+)\sin(\theta_-)|\ge\|f\|^2.$$
Furthermore, equality holds if and only if
$$f(z)=C\exp\left(\frac{c-1}{2(c+1)}\,z^2+\frac{a+ibc}{c+1}\,z\right),$$
where $c$ is positive, $a$ and $b$ are real, and $C$ is complex and nonzero.
\label{8}
\end{cor}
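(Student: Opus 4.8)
The plan is to express the geometric quantity $\|f'\pm zf\|^2\sin^2(\theta_\pm)$ in terms of norms and inner products, and then to invoke Corollary~\ref{5}. As already noted in the introduction, if $f'\notin F^2$ the left-hand side is infinite and the inequality is trivial, so we may assume $f'\in F^2$; then $zf\in F^2$ and both $f'\pm zf\in F^2$. I would first check that neither $f'+zf$ nor $f'-zf$ vanishes identically, so that $\theta_\pm$ are well defined: $f'+zf=0$ forces $f(z)=Ce^{-z^2/2}$ and $f'-zf=0$ forces $f(z)=Ce^{z^2/2}$, and by (\ref{eq8}) neither of these lies in $F^2$ unless $C=0$, contrary to $f\not\equiv0$.

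Recall that the angle $\theta$ between two nonzero vectors $u,v$ of a complex Hilbert space satisfies $\cos\theta=|\langle u,v\rangle|/(\|u\|\,\|v\|)$, so that
\begin{equation*}
\|u\|^2\sin^2\theta=\|u\|^2-\frac{|\langle u,v\rangle|^2}{\|v\|^2}.
\end{equation*}
Applying this with $v=f$ and $u=f'\pm zf$ gives
\begin{equation*}
\|f'\pm zf\|^2\sin^2(\theta_\pm)=\|f'\pm zf\|^2-\frac{|\langle f'\pm zf,f\rangle|^2}{\|f\|^2}.
\end{equation*}
Next I would normalize: put $g=f/\|f\|$. Since $g'\pm zg=(f'\pm zf)/\|f\|$ and $\langle g'\pm zg,g\rangle=\langle f'\pm zf,f\rangle/\|f\|^2$, a one-line computation turns the previous display into
\begin{equation*}
\|f'\pm zf\|^2\sin^2(\theta_\pm)=\|f\|^2\left(\|g'\pm zg\|^2-|\langle g'\pm zg,g\rangle|^2\right).
\end{equation*}
Multiplying the $+$ and $-$ instances of this identity and applying Corollary~\ref{5} to the unit vector $g$, which asserts precisely that the product of the two factors on the right is at least $1$, yields
\begin{equation*}
\|f'+zf\|^2\,\|f'-zf\|^2\,\sin^2(\theta_+)\sin^2(\theta_-)\ge\|f\|^4,
\end{equation*}
and taking positive square roots gives the asserted inequality.

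For the equality statement I would pass back and forth through $g$ once more. Equality above holds if and only if equality holds in Corollary~\ref{5} for $g$, which by that corollary happens exactly when $g(z)=C_0\exp\left(\frac{c-1}{2(c+1)}z^2+\frac{a+ibc}{c+1}z\right)$ for some positive $c$, real $a,b$, and complex $C_0$ normalized so that $\|g\|=1$. Since $f=\|f\|g$, absorbing $\|f\|$ into the constant shows this is equivalent to $f$ having the stated form with $C=\|f\|C_0$ complex and nonzero — nonzero because $f\not\equiv0$ — and conversely every $f$ of this form makes each inequality in the proof an equality by reversing the steps.

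I do not anticipate an essential obstacle: all of the analytic content is already contained in Corollary~\ref{5}. The only points that require care are the elementary identity for $\sin^2\theta$ in terms of inner products (with the consistent use of absolute values there), the harmless normalization $f\mapsto f/\|f\|$, and the verification that $f'\pm zf\neq0$ so that the angles $\theta_\pm$ are meaningful.
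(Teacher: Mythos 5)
Your proposal is correct and follows essentially the same route as the paper: rewrite $\|f'\pm zf\|^2\sin^2(\theta_\pm)$ as $\|f'\pm zf\|^2-|\langle f'\pm zf,f\rangle|^2/\|f\|^2$ and reduce to Corollary~\ref{5}. Your explicit normalization $g=f/\|f\|$ and the check that $f'\pm zf\not\equiv 0$ are details the paper leaves implicit, but the argument is the same.
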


\begin{proof}
We actually have
\begin{eqnarray*}
\|f'+zf\|^2-\frac{|\langle f'+zf,f\rangle|^2}{\|f\|^2}&=&\|f'+zf\|^2\left[1-\left|\frac{\langle f'+zf,f\rangle}
{\|f'+zf\|\|f\|}\right|^2\right]\\
&=&\|f'+zf\|^2(1-\cos^2(\theta_+))\\
&=&\|f'+zf\|^2\sin^2(\theta_+).
\end{eqnarray*}
The same argument shows that
$$\|f'-zf\|^2-\frac{|\langle f'-zf,f\rangle|^2}{\|f\|^2}=\|f'-zf\|^2\sin^2(\theta_-).$$
The desired result then follows from Corollary~\ref{5} and its proof.
\end{proof}

\begin{cor}
Suppose $f$ is a unit vector in $F^2$ and $\theta_{\pm}$ are the angles between $f$ and $f'\pm zf$ in $F^2$.
Then for any $\sigma>0$ we have
$$\left(\frac\sigma2\|f'+zf\|^2+\frac1{2\sigma}\|f'-zf\|^2\right)|\sin(\theta_+)\sin(\theta_-)|\ge1,$$
with equality if and only if $f$ satisfies the integral-differential equation (\ref{eq9}) and
$\|f'-zf\|=\sigma\|f'+zf\|$. In particular,
$$(\|f'\|^2+\|zf\|^2)|\sin(\theta_+)\sin(\theta_-)|\ge1,$$
with equality if and only if $f$ satisfies the integral-differential equation (\ref{eq9}) and the identity
$\|f'+zf\|=\|f'-zf\|$.
\label{9}
\end{cor}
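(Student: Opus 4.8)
The plan is to derive Corollary~\ref{9} by combining the minimization identity for $\|(A-a)f\|^2$ (and its analogue for $B$) with the geometric reformulation of those minima that was carried out in the proof of Corollary~\ref{8}. Concretely, for a unit vector $f$ we have shown
$$\min_{a\in\R}\|f'+zf-af\|^2=\|f'+zf\|^2-|\langle f'+zf,f\rangle|^2=\|f'+zf\|^2\sin^2(\theta_+),$$
and likewise $\min_{b\in\R}\|f'-zf-ibf\|^2=\|f'-zf\|^2\sin^2(\theta_-)$. So the first step is to invoke Corollary~\ref{7} applied to $f$ after rescaling — or, more directly, to run the Corollary~\ref{7} argument with the \emph{minimized} quantities in place of the raw norms. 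That is, starting from Corollary~\ref{5}, $(\|f'+zf\|^2\sin^2(\theta_+))(\|f'-zf\|^2\sin^2(\theta_-))\ge1$, I apply the elementary AM--GM inequality $\sqrt{XY}\le\frac12(\sigma X+\frac1\sigma Y)$ with $X=\|f'+zf\|^2\sin^2(\theta_+)$, $Y=\|f'-zf\|^2\sin^2(\theta_-)$. This gives
$$1\le\sqrt{XY}\le\frac12\left(\sigma\|f'+zf\|^2\sin^2(\theta_+)+\frac1\sigma\|f'-zf\|^2\sin^2(\theta_-)\right).$$
To reach the stated form I then bound $\sin^2(\theta_\pm)\le|\sin(\theta_+)\sin(\theta_-)|$ is \emph{not} quite right; instead I factor out $|\sin(\theta_+)\sin(\theta_-)|$ by noting the displayed right-hand side is at most $|\sin(\theta_+)\sin(\theta_-)|\cdot\frac12(\sigma\|f'+zf\|^2+\frac1\sigma\|f'-zf\|^2)$ only when $\sin^2(\theta_\pm)\le|\sin(\theta_+)\sin(\theta_-)|$, which forces reconsidering the order of operations. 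The cleaner route, which I will actually take, is: apply Corollary~\ref{7} to get $\frac\sigma2\|f'+zf\|^2+\frac1{2\sigma}\|f'-zf\|^2\ge\|f\|^2=1$ first, but that loses the sines. So the correct logic is to chain Corollary~\ref{5} with AM--GM: from $XY\ge1$ and $\sqrt{XY}\le\frac12(\sigma X+\frac1\sigma Y)$ we get $\frac\sigma2 X+\frac1{2\sigma}Y\ge1$, i.e.
$$\frac\sigma2\|f'+zf\|^2\sin^2(\theta_+)+\frac1{2\sigma}\|f'-zf\|^2\sin^2(\theta_-)\ge1,$$
and then since $\sin^2(\theta_\pm)\le|\sin(\theta_+)\sin(\theta_-)|$ fails in general, I instead observe $\sin^2(\theta_+)\le|\sin(\theta_+)|$ and $\sin^2(\theta_-)\le|\sin(\theta_-)|$ and bound crudely — no. The honest final step is simply: the left side above is $\le\big(\frac\sigma2\|f'+zf\|^2+\frac1{2\sigma}\|f'-zf\|^2\big)\max(\sin^2\theta_+,\sin^2\theta_-)$, and since each $|\sin\theta_\pm|\le1$ we have $\max(\sin^2\theta_+,\sin^2\theta_-)\le|\sin(\theta_+)\sin(\theta_-)|^{?}$ — this inequality goes the wrong way, so the bookkeeping must be done by replacing \emph{both} $\|f'\pm zf\|^2$ in Corollary~\ref{7}'s proof by $\|f'\pm zf\|^2\sin^2(\theta_\pm)$ throughout, which I now recognize is exactly legitimate because Corollary~\ref{7}'s proof only used Corollary~\ref{6}, and Corollary~\ref{5} is the $\sin$-weighted analogue of Corollary~\ref{6}.

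So the actual proof I will write: By Corollary~\ref{8} (or Corollary~\ref{5} combined with the identities in the proof of Corollary~\ref{8}), for a unit vector $f$ we have $\big(\|f'+zf\|^2\sin^2\theta_+\big)\big(\|f'-zf\|^2\sin^2\theta_-\big)\ge1$. Apply AM--GM with weight $\sigma>0$:
$$1\le\sqrt{\|f'+zf\|^2\sin^2\theta_+\cdot\|f'-zf\|^2\sin^2\theta_-}\le\frac12\left(\sigma\|f'+zf\|^2\sin^2\theta_++\frac1\sigma\|f'-zf\|^2\sin^2\theta_-\right).$$
Finally, factor: since $\sin^2\theta_+=|\sin\theta_+|^2\le|\sin\theta_+|$ and similarly for $\theta_-$, and since we may assume both $\sin\theta_\pm\ne0$ (else the left side is undefined/the claim is vacuous), we write $\sin^2\theta_\pm=|\sin\theta_+\sin\theta_-|\cdot\frac{|\sin\theta_\pm|}{|\sin\theta_\mp|}$ — this reintroduces an unwanted ratio. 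The resolution is that the desired inequality, as stated, actually follows by pulling $|\sin\theta_+\sin\theta_-|$ out \emph{only} when we first bound $\|f'+zf\|^2\sin^2\theta_+\le\|f'+zf\|^2|\sin\theta_+|\cdot|\sin\theta_-|$, which requires $|\sin\theta_+|\le|\sin\theta_-|$; without loss of generality this holds for one of the two, and the other term is handled symmetrically — no, $\sigma$ is fixed. I will therefore present the result as the \emph{consequence} reached by the substitution trick in Corollary~\ref{7}'s proof: replace $f'\pm zf$ there by the orthogonal projections onto $[f]^\perp$, whose norms are $\|f'\pm zf\|\,|\sin\theta_\pm|$, and whose product-inequality is exactly Corollary~\ref{5}; the rest of Corollary~\ref{7}'s argument goes through verbatim, yielding $\frac\sigma2\|f'+zf\|^2\sin^2\theta_++\frac1{2\sigma}\|f'-zf\|^2\sin^2\theta_-\ge1$, and then bounding $\sin^2\theta_\pm\le|\sin\theta_+\sin\theta_-|$ is \emph{false} in general but $\sin^2\theta_\pm\le 1$ combined with keeping one factor of $|\sin\theta_+\sin\theta_-|$ is what the statement intends — in other words the statement as printed is what drops out if one writes $\sin^2\theta_+\sin^2\theta_-$ under the square root and $\sin\theta_+\sin\theta_-$ outside, i.e. the inequality to prove is literally $\big(\frac\sigma2\|f'+zf\|^2+\frac1{2\sigma}\|f'-zf\|^2\big)|\sin\theta_+\sin\theta_-|\ge1$, and this follows from AM--GM applied to $X'=\|f'+zf\|^2|\sin\theta_+\sin\theta_-|$ and $Y'=\|f'-zf\|^2|\sin\theta_+\sin\theta_-|$ once we know $X'Y'=\|f'+zf\|^2\|f'-zf\|^2\sin^2\theta_+\sin^2\theta_-\ge1$, which is exactly Corollary~\ref{5}. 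That is the clean argument, and it is what I will write.

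For the equality analysis: equality in AM--GM forces $\sigma X'=\frac1\sigma Y'$, i.e. $\sigma\|f'+zf\|^2|\sin\theta_+\sin\theta_-|=\frac1\sigma\|f'-zf\|^2|\sin\theta_+\sin\theta_-|$, which (since $\sin\theta_\pm\ne0$) gives $\|f'-zf\|=\sigma\|f'+zf\|$; and equality in $X'Y'\ge1$ is, by Corollary~\ref{5} and its proof, equivalent to $f$ satisfying the integral-differential equation~(\ref{eq9}). Conversely if both conditions hold one traces back through the chain to get equality. The special case $\sigma=1$ gives, using $\|f'\|^2+\|zf\|^2=\frac12(\|f'+zf\|^2+\|f'-zf\|^2)$ and the fact that under $\|f'+zf\|=\|f'-zf\|$ this common value equals $\|f'+zf\|\|f'-zf\|$, the stated inequality $(\|f'\|^2+\|zf\|^2)|\sin\theta_+\sin\theta_-|\ge1$ with equality iff~(\ref{eq9}) holds and $\|f'+zf\|=\|f'-zf\|$. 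The main obstacle is purely presentational rather than mathematical: keeping straight which factors of $\sin$ to park inside the square root versus outside so that the printed inequality comes out exactly as stated; the underlying content is just Corollary~\ref{5} plus weighted AM--GM, with the equality cases inherited from Corollary~\ref{5} and from the AM--GM equality condition. I will also remark, as elsewhere in the paper, that if $f'\notin F^2$ or if $\sin\theta_+\sin\theta_-=0$ the inequality is trivial or vacuous, so we lose nothing by assuming $f$ lies in $\Dom(A)\cap\Dom(B)$ with $f'\pm zf\notin[f]$.
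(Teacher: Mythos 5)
Your final ``clean argument''---apply the product inequality of Corollary~\ref{8} (equivalently Corollary~\ref{5}) to $X'=\|f'+zf\|^2|\sin\theta_+\sin\theta_-|$ and $Y'=\|f'-zf\|^2|\sin\theta_+\sin\theta_-|$, then use the weighted AM--GM step from Corollary~\ref{7}'s proof, with the two equality conditions $\|f'-zf\|=\sigma\|f'+zf\|$ and equation~(\ref{eq9}) inherited from the two links of the chain---is correct and is exactly the route the paper intends by ``follows from Corollaries~\ref{7} and~\ref{8} and their proofs.'' The many retracted false starts (e.g.\ the repeated attempts to use $\sin^2\theta_\pm\le|\sin\theta_+\sin\theta_-|$) should simply be deleted; only the last paragraph's argument belongs in the written proof.
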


\begin{proof}
This follows from Corollaries \ref{7} and \ref{8} and their proofs.
\end{proof}

\begin{cor}
Suppose $f$ is any function in $F^2$, not identically zero. Then
$$\dist(f'-zf,[f])\,\dist(f'+zf,[f])\ge\|f\|^2,$$
where $[f]$ is the one-dimensional subspace of $F^2$ spanned by $f$ and $d(g,X)$ denotes the distance
in $F^2$ from $g$ to $X$. Furthermore, equality holds if and only if
$$f(z)=C\exp\left(\frac{c-1}{2(c+1)}\,z^2+\frac{a+ibc}{c+1}\,z\right),$$
where $c$ is positive, $a$ and $b$ are real, and $C$ is complex and nonzero.
\label{10}
\end{cor}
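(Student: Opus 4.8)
The plan is to read Corollary~\ref{10} as a purely geometric restatement of Corollary~\ref{8}. The only ingredient beyond what is already proved is the elementary Hilbert space fact that the distance from a vector $g$ to a one-dimensional subspace $[f]=\C f$ (with $f\neq0$) is attained at the orthogonal projection, namely
$$\dist(g,[f])^2=\|g\|^2-\frac{|\langle g,f\rangle|^2}{\|f\|^2},$$
the minimizing scalar being $\lambda=\langle g,f\rangle/\|f\|^2$. First I would record this identity (it is just the Pythagorean theorem applied to the decomposition $g=Pg+(I-P)g$, where $P$ is the rank-one orthogonal projection onto $[f]$), emphasizing that, because $[f]$ is a \emph{complex} line, the relevant quantity is the modulus $|\langle g,f\rangle|$ rather than $\mathrm{Re}\,\langle g,f\rangle$.

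Next I would apply this identity with $g=f'+zf$ and with $g=f'-zf$. Comparing with the computation carried out in the proof of Corollary~\ref{8}, which rewrites $\|f'\pm zf\|^2-|\langle f'\pm zf,f\rangle|^2/\|f\|^2$ as $\|f'\pm zf\|^2\sin^2(\theta_\pm)$, we obtain
$$\dist(f'\pm zf,[f])=\|f'\pm zf\|\,|\sin(\theta_\pm)|.$$
Multiplying the two identities and invoking the inequality of Corollary~\ref{8} gives
$$\dist(f'+zf,[f])\,\dist(f'-zf,[f])=\|f'+zf\|\,\|f'-zf\|\,|\sin(\theta_+)\sin(\theta_-)|\ge\|f\|^2,$$
which is exactly the asserted inequality.

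For the equality statement I would transport the equality case of Corollary~\ref{8}: equality in the display above is equivalent to equality in $\|f'+zf\|\|f'-zf\||\sin(\theta_+)\sin(\theta_-)|\ge\|f\|^2$, and by Corollary~\ref{8} the latter holds precisely when $f(z)=C\exp\big(\tfrac{c-1}{2(c+1)}z^2+\tfrac{a+ibc}{c+1}z\big)$ with $c>0$, $a,b\in\R$ and $C\neq0$; conversely every such $f$ already attains equality in Corollary~\ref{8}. I do not expect a genuine obstacle here; the only point requiring a moment's care is the distinction between the complex line $\C f$ and the real line $\R f$, and this is harmless because the projection formula involves $|\langle g,f\rangle|$, which is the very quantity appearing in Corollaries~\ref{5} and~\ref{8}. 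As an alternative route one could bypass Corollary~\ref{8} altogether and argue directly from Corollary~\ref{5}, after normalizing $f$ to a unit vector: since $[f]=[f/\|f\|]$ while $f'\pm zf$ and its normalized counterpart differ by the factor $\|f\|$, the bound $\ge1$ of Corollary~\ref{5} rescales to the bound $\ge\|f\|^2$ claimed here.
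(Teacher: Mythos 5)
Your proposal is correct and follows essentially the same route as the paper: the paper's proof also reduces Corollary~\ref{10} to Corollary~\ref{8} via the identities $\dist(f'\pm zf,[f])=\|f'\pm zf\|\,|\sin(\theta_\pm)|$, which you justify slightly more explicitly through the orthogonal projection formula. No gaps.
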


\begin{proof}
This is a restatement of Corollary~\ref{8}, because
$$\dist(f'+zf,[f])=\|f'+zf\||\sin(\theta_+)|,$$
and
$$\dist(f'-zf,[f])=\|f'-zf\||\sin(\theta_-)|,$$
which are clear from elementary observations.
\end{proof}

\section{Some generalizations}

Traditionally, uncertainty principles are stated in terms of real parameters $a$ and $b$. However, Theorem~\ref{1}
can easily be extended to the case of complex parameters.

\begin{thm}
Suppose $A$ and $B$ are self-adjoint operators, possibly unbounded, on a Hilbert space $H$. Then
$$\|(A-a)x\|\|(B-b)x\|\ge\frac12|\langle[A,B]x,x\rangle|$$
for all $x\in\Dom(AB)\cap\Dom(BA)$ and all complex numbers $a$ and $b$. Moreover, equality holds if and only 
if both $a$ and $b$ are real, and the vectors $(A-a)x$ and $(B-b)x$ are purely imaginary multiples of one another.
\end{thm}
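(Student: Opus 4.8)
The plan is to reduce the complex-parameter statement to the real-parameter Theorem~\ref{1} by splitting each of $a$ and $b$ into real and imaginary parts, and then tracking what the imaginary parts contribute. Write $a=a_1+ia_2$ and $b=b_1+ib_2$ with $a_j,b_j\in\R$. First I would observe that, since $A$ is self-adjoint and $a_2$ is real, the operator $ia_2 I$ is skew-adjoint, so for any $x\in\Dom(A)$ the vectors $(A-a_1)x$ and $ia_2x$ are orthogonal in the sense that $\mathrm{Re}\,\langle (A-a_1)x, ia_2 x\rangle = a_2\,\mathrm{Im}\,\langle (A-a_1)x,x\rangle$; but $\langle (A-a_1)x,x\rangle=\langle Ax,x\rangle-a_1\|x\|^2$ is \emph{real} because $A$ is self-adjoint, so this cross term vanishes. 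Hence
\begin{equation}
\|(A-a)x\|^2=\|(A-a_1)x\|^2+a_2^2\|x\|^2\ge\|(A-a_1)x\|^2,
\label{eqcx1}
\end{equation}
and similarly $\|(B-b)x\|^2=\|(B-b_1)x\|^2+b_2^2\|x\|^2\ge\|(B-b_1)x\|^2$.

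Multiplying these two inequalities and then applying Theorem~\ref{1} to the real numbers $a_1$ and $b_1$ gives
$$\|(A-a)x\|\,\|(B-b)x\|\ge\|(A-a_1)x\|\,\|(B-b_1)x\|\ge\tfrac12|\langle[A,B]x,x\rangle|,$$
which is the desired inequality. For the equality statement I would argue in two stages. If equality holds in the displayed chain, then equality must hold in each of the two steps. Equality in the second step is exactly equality in Theorem~\ref{1} for $a_1,b_1$, which by that theorem forces $(A-a_1)x$ and $(B-b_1)x$ to be purely imaginary scalar multiples of one another. Equality in the first step forces equality in both instances of \eqref{eqcx1} and its $B$-analogue; since $x\ne0$ (the inequality is trivial and both sides need care if $x=0$, but then $[A,B]x=0$ too and "purely imaginary multiples" holds vacuously, so I would simply note the statement is understood for $x\ne0$ or handle it as a degenerate case), this means $a_2^2\|x\|^2=0$ and $b_2^2\|x\|^2=0$, i.e. $a_2=b_2=0$, so $a=a_1$ and $b=b_1$ are real. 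Conversely, if $a,b$ are real and $(A-a)x,(B-b)x$ are purely imaginary multiples of one another, then Theorem~\ref{1} already gives equality. Combining, equality in the complex-parameter inequality holds precisely when $a$ and $b$ are real and $(A-a)x$, $(B-b)x$ are purely imaginary scalar multiples of each other.

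I do not expect any serious obstacle here; the only point requiring a little care is the vanishing of the cross term $\mathrm{Re}\,\langle(A-a_1)x,ia_2x\rangle$, which is where self-adjointness of $A$ (making $\langle Ax,x\rangle$ real) is used, and the analogous point for $B$. A secondary bookkeeping point is the domain issue: one should check that $(A-a)x$ makes sense and the Pythagorean splitting \eqref{eqcx1} is valid on $\Dom(AB)\cap\Dom(BA)\subseteq\Dom(A)\cap\Dom(B)$, which is immediate. Everything else is a direct reduction to the already-established real-parameter theorem.
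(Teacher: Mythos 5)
Your proof is correct and follows essentially the same route as the paper's: split $a$ and $b$ into real and imaginary parts, use self-adjointness of $A$ and $B$ to kill the cross terms and obtain the Pythagorean identities $\|(A-a)x\|^2=\|(A-a_1)x\|^2+a_2^2\|x\|^2$ (and likewise for $B$), then reduce to Theorem~\ref{1}. Your handling of the equality case is in fact more explicit than the paper's, which simply appeals to ``Theorem~\ref{1} and its proof.''
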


\begin{proof}
Write $a=a_1+ia_2$, where both $a_1$ and $a_2$ are real. Then
\begin{eqnarray*}
\|(A-a)x\|^2&=&\|(A-a_1)x-ia_2x\|^2\\
&=&\|(A-a_1)x\|^2+|a_2|^2\|x\|^2\\
&&\ -ia_2\langle x,(A-a_1)x\rangle+a_2i\langle(A-a_1)x,x\rangle\\
&=&\|(A-a_1)x\|^2+|a_2|^2\|x\|^2\ge\|(A-a_1)x\|^2.
\end{eqnarray*}
Similarly,
$$\|(B-b)x\|^2=\|(B-b_1)x\|^2+|b_2|^2\|x\|^2\ge\|(B-b_1)x\|^2$$
if $b=b_1+ib_2$. The desired result then follows from Theorem~\ref{1} and its proof.
\end{proof}

Another extension we want to mention here is that everything we have done for the Fock space remains
valid for any annihilation operator $D$ and its associated creation operator $D^*$.

\begin{thm}
Suppose $D$ is any operator on $H$ such that $[D,D^*]=I$. Then
$$\|Dx+D^*x-ax\|\|Dx-D^*x-ibx\|\ge\|x\|^2$$
for all $x\in\Dom(DD^*)\cap\Dom(D^*D)$ and all real numbers $a$ and $b$. Equality holds if and only if 
there exists a real constant $c$ such that
$$Dx+D^*x-ax=-c(D^*x-Dx)+ibcx.$$
\end{thm}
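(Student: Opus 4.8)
The plan is to mimic exactly the proof of Theorem~\ref{4}, replacing the concrete operators $A=D+D^*$ and $B=i(D-D^*)$ on the Fock space by the abstract operators built from an arbitrary $D$ with $[D,D^*]=I$. First I would set $A=D+D^*$ and $B=i(D-D^*)$ and verify that both are self-adjoint: $A^*=D^*+D=A$ is immediate, and $B^*=\overline{i}(D^*-D)=-i(D^*-D)=i(D-D^*)=B$. Next I would compute the commutator, repeating the calculation in the proof of Lemma~\ref{3} verbatim: $AB-BA=i[(D+D^*)(D-D^*)-(D-D^*)(D+D^*)]=2i(D^*D-DD^*)=-2i[D,D^*]=-2iI$. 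The only input used here is the hypothesis $[D,D^*]=I$, so nothing is special about the Fock space.

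With $[A,B]=-2iI$ in hand, I would apply Theorem~\ref{1} with $x$ any vector in $\Dom(AB)\cap\Dom(BA)$. Since $[A,B]x=-2ix$, inequality (\ref{eq1}) gives $\|(A-a)x\|\,\|(B-b)x\|\ge\frac12|\langle-2ix,x\rangle|=\|x\|^2$. Rewriting $A$ and $B$ in terms of $D$ and $D^*$, and using $\|(B-b)x\|=\|i(Dx-D^*x)-bx\|=\|Dx-D^*x-ibx\|$ exactly as in the proof of Theorem~\ref{4}, I get the claimed inequality $\|Dx+D^*x-ax\|\,\|Dx-D^*x-ibx\|\ge\|x\|^2$. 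One should also check that $\Dom(AB)\cap\Dom(BA)$ matches $\Dom(DD^*)\cap\Dom(D^*D)$; since $A$ and $B$ are fixed real-linear combinations of $D$ and $D^*$, a vector lies in $\Dom(AB)\cap\Dom(BA)$ precisely when all four compositions $D^2,DD^*,D^*D,(D^*)^2$ make sense on it, and the stated domain captures the cross terms, which are the substantive ones.

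For the equality case I would invoke the equality clause of Theorem~\ref{1}: equality holds iff $(A-a)x$ and $(B-b)x$ are purely imaginary scalar multiples of one another, i.e.\ there is a real $c$ with $(A-a)x=ic\,[i(Dx-D^*x)-bx]=-c(D^*x-Dx)+ibcx$. Substituting $A=D+D^*$ turns the left side into $Dx+D^*x-ax$, which is exactly the stated equality condition. I would emphasize that, unlike Theorem~\ref{4}, we stop here: there is no analogue of the growth estimate (\ref{eq8}) forcing $c\ge0$, because in a general Hilbert space nothing rules out the solutions of $(1+c)Dx+[(1-c)D^*-(a+ibc)]x=0$ with $c<0$; the sign restriction on $c$ was a phenomenon particular to the Fock space.

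The main obstacle is purely bookkeeping rather than conceptual: one must be careful that the abstract domains $\Dom(AB)\cap\Dom(BA)$ and $\Dom(DD^*)\cap\Dom(D^*D)$ really do coincide, so that Theorem~\ref{1} applies on exactly the set of vectors named in the statement. Once that is checked, the proof is a two-line computation of the commutator followed by a direct citation of Theorem~\ref{1} and its equality clause, with no genuine analysis involved.
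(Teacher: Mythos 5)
Your proof is correct and is exactly the argument the paper intends: the paper's proof is the single line ``this follows from the proofs of Lemma~\ref{3} and Theorem~\ref{4},'' and you have simply written out that reduction, including the correct observation that the restriction to $c\ge0$ in Theorem~\ref{4} was a Fock-space phenomenon that does not survive in the abstract setting.
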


\begin{proof}
This follows from the proofs of Lemma~\ref{3} and Theorem~\ref{4}.
\end{proof}

Again, it is easy to extend the result to the case of complex parameters, and just like the case of the Fock
space, several different versions of the uncertainty principle can be formulated for such a pair of annihilation
and creation operators. Details are left to the interested reader.

\section{Further remarks}

As was mentioned at the beginning of Section 2, we have an uncertainty principle whenever
we have two self-adjoint operators whose commutator is a constant multiple of the identity. In the case
of the Fock space, the operators $Af=f'+zf$ and $Bf=i(f'-zf)$ are easily shown to be self-adjoint and
they have a nice commutation relation. However, our motivation actually came from classical Fourier analysis.

More specifically, the operator $X$ of multiplication by $x$ on $L^2(\R)=L^2(\R,dx)$ is clearly self-adjoint.
It is also well known that the operator 
$$D=\frac1{2\pi i}\frac{d}{dx}$$ 
is self-adjoint on $L^2(\R)$. Via the so-called Bargmann transform (see \cite{F,G,Z}), which is an isometric linear 
transformation from $L^2(\R)$ onto the Fock space $F^2$, it can be shown that $X$ is unitarily equivalent to the 
operator $A/2$, and $D$ is unitarily equivalent to $-B/(2\pi)$, where $A$ and $B$ are defined in (\ref{eq3}). The 
well-known commutation relation 
$$[X,D]=-\frac1{2\pi i}I$$ on $L^2(\R)$ then translates into the commutation relation $[A,B]=-2i I$ on $F^2$.

The classical uncertainty principle in Fourier analysis states that
$$\|Xf\|^2+\|Df\|^2\ge\frac1{2\pi}\|f\|^2$$
for all $f\in L^2(\R)$, with equality holding if and only if 
$$f(x)=C\exp(-\pi x^2)$$ 
is a multiple of the standard Gaussian. See Corollary 2.2.3 of \cite{G} and Corollary 1.37 of \cite{F}. Via the Bargmann 
transform we see that this classical uncertainty principle is equivalent to the following inequality on the Fock space:
$$\frac14\|f'+zf\|^2+\frac1{4\pi^2}\|f'-zf\|^2\ge\frac1{2\pi}\|f\|^2.$$
This is a consequence of our Corollary~\ref{7} with the choice of $\sigma=\pi$.

Alternatively, we could have obtained all our results for the Fock space from the classical
uncertainty principle in Fourier analysis via the Bargmann transform. We chose to do things directly in order to
avoid unnecessary background material that are not familiar to many complex analysts.

The results in the paper can be generalized to the case of Fock spaces in $\cn$ with a weighted Gaussian measure
$$d\lambda_\alpha(z)=\left(\frac\alpha\pi\right)^n e^{-\alpha|z|^2}\,dv(z).$$
We leave the details to the interested reader.

Finally, we remark that it is now natural to raise the problem of establishing uncertainty principles for other familiar 
function spaces such as the Hardy space and the Bergman space. Equivalently, for such spaces, we want to
construct natural self-adjoint operators whose commutator is a multiple of the identity operator. We intend to
continue this line of research in a subsequent paper.

\end{document}